\crefname{subsection}{Subsection}{Subsections}
\Crefname{subsection}{Subsection}{Subsections}
\newtheorem{theorem*}{Theorem}
\newtheorem{theorem}{Theorem}[section]
\newtheorem{definition}[theorem]{Definition}
\theoremstyle{definition}
\newtheorem{remark}[theorem]{Remark}
\newcommand{\I}{\text{$\mathrm{I}$}}
\newcommand{\II}{\text{$\mathrm{II}$}}
\newcommand{\N}{\text{$\mathbb{N}$}}
\newcommand{\rca}{\mathsf{RCA}_0}
\newcommand{\aca}{\mathsf{ACA}_0}
\newcommand{\Det}{\mathsf{Det}}
\newcommand{\SI}{\mathrm{S}_{\I}}
\newcommand{\SII}{\mathrm{S}_{\II}}
\newcommand\blfootnote[1]{%
  \begingroup
  \renewcommand\thefootnote{}\footnote{#1}%
  \addtocounter{footnote}{-1}%
  \endgroup
}
\begin{document}

\title{Binary Choice Games and Arithmetical Comprehension}
\author[$^1$]{J. P. Aguilera}
\author[$^2$]{T. Kouptchinsky}
\affil[$^{1,2}$]{Institute of Discrete Mathematics and Geometry, Vienna University of Technology. Wiedner Hauptstrasse 8-10, 1040 Vienna, Austria}	
\affil[$^{2}$]{Corresponding author: \texttt{thibaut.kouptchinsky@tuwien.ac.at}}
\maketitle
 \begin{abstract}
We prove that Arithmetical Comprehension is equivalent to the determinacy of all clopen integer games in which each player has at most two moves per turn.
 \blfootnote{MSC (2020): 03B30 (Primary), 03F35.}  \blfootnote{Keywords: Reverse mathematics, determinacy, weak K\"onig's lemma}
 \end{abstract}
\setcounter{tocdepth}{2}
\numberwithin{equation}{section}

\section{Introduction}
The purpose of this article is to prove the following theorem:
\begin{theorem}\label{TheoremMain}
  Fix $k \geq 1$. The following are equivalent over $\rca$:
  \begin{enumerate}
    \item Determinacy for binary choice wellfounded game trees.
    \item $\Delta^0_1$-$\Det$ for binary choice game trees.
    \item $(\Sigma^0_1)_k$-$\Det$ for binary choice game trees.
    \item $\aca$.
  \end{enumerate}
\end{theorem}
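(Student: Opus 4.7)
The plan is to prove the cycle $(4) \Rightarrow (3) \Rightarrow (2) \Rightarrow (1) \Rightarrow (4)$, with $(1) \Rightarrow (4)$ being the substantive reversal. For $(4) \Rightarrow (3)$, I would argue in $\aca$ that open determinacy holds on binary-choice game trees via a standard transfinite-rank analysis of winning positions, then extend this to $(\Sigma^0_1)_k$ payoffs by induction on $k$, peeling off one level of the difference hierarchy at a time by passing to an auxiliary open subgame on a restricted set of positions. The implication $(3) \Rightarrow (2)$ is immediate from $\Delta^0_1 \subseteq (\Sigma^0_1)_k$, and $(2) \Rightarrow (1)$ follows because on a wellfounded game tree every play is eventually terminal, so the payoff reduces to a $\Delta^0_1$ property of the terminal position reached.

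For $(1) \Rightarrow (4)$, I would use the standard equivalent of $\aca$: the range of every injection $f \colon \N \to \N$ exists as a set. Given such an $f$, the aim is to construct, uniformly in $m$, a binary-choice wellfounded game $G_m$ on a tree $T_m \subseteq \N^{<\N}$ such that Player I has a winning strategy in $G_m$ if and only if $m$ lies in the range of $f$. The key resource is that, in $\rca$, bounded-branching subtrees of $\N^{<\N}$ carry more information than subtrees of $2^{<\N}$: identifying the children of a node requires a $\Sigma^0_1$ search for their labels, so such trees can be genuinely infinite and yet wellfounded without requiring $\aca$ or even $\mathsf{WKL}_0$. This gives room to encode a nontrivial search for preimages of $m$ under $f$ while keeping the tree provably wellfounded.

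The main obstacle is to design $T_m$ so that (i) it is provably wellfounded in $\rca$ without tacitly invoking $\aca$, and (ii) the winner of $G_m$ is correctly tied to the membership of $m$ in the range of $f$. I expect wellfoundedness to follow from a pigeonhole argument exploiting the injectivity of $f$: at each turn the two available edges would be labeled by distinct numbers derived from the $f$-values seen along the play, and any infinite descent would force a repetition contradicting injectivity of $f$. The winning condition would be designed so that producing a preimage of $m$ forces Player I's victory, while the absence of one funnels the play into a configuration won by Player II. Once the family $(G_m)_{m \in \N}$ is in place, applying the determinacy hypothesis uniformly in $m$ produces the characteristic function of the range of $f$, giving $\aca$ and closing the cycle.
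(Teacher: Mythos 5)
There is a genuine gap in your reversal $(1)\Rightarrow(4)$, and it sits exactly at the step you treat as routine: ``applying the determinacy hypothesis uniformly in $m$ produces the characteristic function of the range of $f$.'' Determinacy of each game $G_m$ only asserts, for each $m$ separately, the existence of a winning strategy for one of the players; it yields neither a uniform sequence of strategies nor a $\Delta^0_1$ criterion for which player wins, so the set $\{m : \text{I wins } G_m\}$ admits only a $\Sigma^1_1$ (and co-$\Sigma^1_1$) definition, which $\rca$ cannot convert into a set. Worse, a wellfounded game in which player I wins exactly when he can exhibit a preimage of $m$ is essentially a one-round clopen game that $\rca$ already proves determined, so no strength can be extracted from its determinacy; and the complementary assertion $m\notin\operatorname{range}(f)$ has no finite verification, so it cannot be certified inside a wellfounded game at all --- your pigeonhole idea for wellfoundedness does not touch this difficulty. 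The standard remedy, and the one the paper uses, is to build a \emph{single} game in which a winning strategy $\tau$ for the challenged player itself encodes the desired set, and then to apply $\Delta^0_1$ comprehension relative to $\tau$. Concretely, the paper reduces $\aca$ to K\"onig's lemma for bounded trees, assumes an infinite binary choice tree $T$ with no branch, and plays a length-comparison game (I builds $t\in T$, II answers with a successor $u_0$, I builds $v$ avoiding $u_0$, II builds $u'$ above $u_0$, and II wins iff $|v|\leq|u|$); the game tree is wellfounded precisely because $T$ has no branch, player I provably has no winning strategy, and a winning strategy for II is converted into a branch of $T$ by a delicate $\Sigma^0_1$-induction with a subsidiary $\Pi^0_1$-induction bounding $|T|$. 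None of these ingredients appears in your plan, and without them the reversal does not go through.

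The forward direction also needs more care than you give it. A ``transfinite-rank analysis of winning positions'' for open games on $\N^{<\N}$ is exactly the argument that costs $\mathsf{ATR}_0$ (Steel), so you must exploit the compactness of binary choice trees somewhere, and peeling off levels of the difference hierarchy requires comprehension for the relevant sets of winning positions --- in effect you would be re-proving the Nemoto--MedSalem--Tanaka theorem from scratch. The paper instead recursively embeds the binary choice tree into $2^{<\N}$, uses $\aca$ to form the image tree, observes (Remark~\ref{RemarkDifferenceGamma}) that the payoff becomes $(\Sigma^0_1)_{k+2}$ in Cantor space, applies Nemoto--MedSalem--Tanaka, and pulls the strategy back. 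Your steps $(3)\Rightarrow(2)\Rightarrow(1)$ are fine.
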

Here $(\Sigma^0_1)_k$ denotes the $k$th level of the difference hierarchy over $\Sigma^0_1$, i.e.,  the class of sets obtained from $k$ nested differences of open sets.

In the games we consider, we suppose that players alternate turns playing numbers $x_i \in\mathbb{N}$. After infinitely many moves have been played, the winner is determined based on whether $x = (x_0, x_1, x_2, \hdots)$ belongs to some clopen subset of $\mathbb{N}^\mathbb{N}$. The additional condition is that in each turn, each player has at most two legal moves they can make, with the provision that an illegal move results in losing the game. The games are defined more carefully in \S\ref{SectionDefinitions} using the mechanism of \textit{game trees}: the game $G(T,A)$ is the game in which player I's winning set is $A$, but both players are required to play within the tree $T$.

Theorem \ref{TheoremMain} is to be contrasted with previous results concerning similar games. Nemoto, MedSalem, and Tanaka \cite{NeMSTa07} have shown that the determinacy of all $\Delta^0_1$ games with moves in $\{0,1\}$ is equivalent to $\mathsf{WKL}_0$. Simpson \cite{Simpson} has shown that the determinacy of all games of length $k$ with integer moves is equivalent to $\aca$ (for a fixed $k \geq 3$). By a well known theorem of Steel \cite{St77}, $\Delta^0_1$-Determinacy of games on $\mathbb{N}$ is equivalent to $\mathsf{ATR}_0$.

In what follows we assume some basic knowledge of subsystems of second-order arithmetic, e.g., as presented in Simpson \cite{Simpson}. Below, the notation $\Delta^0_1$ refers to formulas which possibly contain second-order parameters.

\section{Binary Choice Games in Second-Order Arithmetic}\label{SectionDefinitions}
\begin{definition}[Binary choice tree]
 The set $T$ of (codes for) finite sequences of natural numbers $(\tau \in \N^{< \N})$ is a binary choice tree if it satisfies:\begin{enumerate}
    \item $\forall \tau \ \tau \in T \rightarrow (\forall n < |\tau|)\ \tau[n] \in T$ and;
    \item $\forall \tau \ \tau \in T \rightarrow \exists^{\leq 2} n \ \tau^\smallfrown n \in T$.
  \end{enumerate}
\end{definition}

\begin{definition}
  Let $T$ be a binary choice game tree. 
  \begin{enumerate}
    \item We define a \textbf{regular strategy} $\sigma$ (respectively, $\tau$) for player \I\ (respectively, \II) -- written $\sigma \in \SI$ (respectively $\tau \in \SII$) -- as any function $\sigma: \N^{\mathrm{even}} \to \N$ (respectively $\tau: \N^{\mathrm{odd}} \to \N$). We define $\sigma \otimes \tau$ as the unique sequence of natural numbers $\langle n_{i} \rangle_{i \in \mathbb{N}}$ produced by the alternation of these two strategies across the even and odd indices. 
That is, \begin{align*}
      n_{0} = \sigma(\emptyset) \land n_{i+1} = \left\{ \begin{array}{ll}
        \sigma(\langle n_0, \dots, n_i \rangle) &\text{if $i$ is even}; \\
        \tau(\langle n_0, \dots, n_i \rangle) &\text{if $i$ is odd}.
      \end{array} \right.
    \end{align*}
    \item We define a \textbf{restricted strategy} $\sigma$ for player \I\ in the binary choice game tree $T$ -- written $\sigma \in \SI(T)$ -- as a subtree of $T$ 
    that satisfies the properties: \begin{align*}
      &\forall t \in \N^{\mathrm{even}} \ (t \in \sigma) \rightarrow (\exists! n \ (t^{\smallfrown}n \in \sigma)) \quad \text{and} \\
      &\forall t \in \N^{\mathrm{odd}} \ (t \in \sigma) \rightarrow [\forall n \ (t^{\smallfrown}n \in T) \rightarrow (t^{\smallfrown}n \in \sigma)].
    \end{align*}
    A restricted strategy $\tau$ for player \II\ -- written $\tau \in \SII(T)$ -- is defined similarly, swapping the role of odd and even. We define $\sigma \otimes \tau$ as the intersection $\sigma \cap \tau$.
  \end{enumerate}
\end{definition}

\begin{remark}
For any binary choice game tree $T$ and any node $t \in T$, $\rca$ proves that $Succ(t) = \{ s : \exists n \ s = t^{\smallfrown}n \land s \in T \}$ exists by bounded $\Sigma^0_1$ comprehension (see Simpson \cite{Simpson}*{theorem II.3.9}).
\end{remark}

The subtlety of games on binary choice trees relies on the fact that the exact structure of the game tree is inaccessible in $\rca$. Even if $T$ is recursive, the set of nodes in $t$ that have exactly one successor might not be. 
In contrast, for binary games with moves in $\{0,1\}$, and unrestricted games on natural numbers, the game tree can be assumed to be the full tree of their respective space. For these games, the winning condition is the only interesting structure to take into account.

  Coming back to our binary choice game trees: we want to penalize the first player playing outside a tree $T$.
  To this aim, let us define $\mu_I(x)$, ranging over $\N^{\N}$ by the formula \begin{align*}
    &\exists n \ x[2n+1] \not\in T \land x[2n] \in T,
  \end{align*} 
  that is, $\mu_I$ asserts that player $\I$ played outside $T$ first.
  The formula $\mu_{II}(x)$ is defined similarly, by \begin{align*}
    &\exists n \ x[2n+2] \not\in T \land x[2n+1] \in T.
  \end{align*} 
Taking $\phi(x) \in \Gamma$, we can then define: 
\begin{definition}
\textbf{$\Gamma$-Determinacy for binary choice game trees} asserts that whenever $T$ is a binary choice game tree and $\phi \in \Gamma$, then the game $G(T,\phi)$ is determined, i.e.,
 \begin{align*}
    \exists \sigma \in \SI \forall \tau \in \SII \psi( \sigma \otimes \tau) \lor \exists \tau \in \SII \forall \sigma \in \SI \lnot \psi( \sigma \otimes \tau),
  \end{align*} 
  where $\psi(x) \equiv \lnot\mu_{\I}(x) \land (\phi(x) \lor \mu_{\II}(x))$. \label{binary_det}
\end{definition}

\begin{remark}\label{RemarkDifferenceGamma}
If $\phi \in \Gamma$,
$G(T, \psi)$ has a $\Pi^0_1\wedge (\Gamma \vee \Sigma^0_1)$ payoff set, but this payoff set is of a very particular kind, as the $\Pi^0_1$ set and the $\Sigma^0_1$ are both derived from the same tree $T$. For the classes $\Gamma$ we consider below (namely, $\Gamma = \Delta^0_1$ or $\Gamma = (\Sigma^0_1)_n$), this payoff set will generally be of complexity greater than $\Gamma$. By \cite[Corollary 21]{AgMSD}, these are essentially the only classes for which the complexity differs.
\end{remark}
 Let us state an equivalent definition, for which the geometry of the tree does not impact the complexity of the winning condition.

\begin{definition}\label{binary_det2}
  Let $\phi(x)$ be a formula of some class $\Gamma$ with a distinguished free set variable and $T$ be a binary choice game tree.
  We say that \textbf{the game $G(T,\phi)$ is determined in restricted strategies} if 
  \begin{align*}
      \exists \sigma \in \SI(T) \forall \tau \in \SII(\sigma) \phi(\sigma \otimes \tau) \lor \exists \tau \in \SII(T) \forall \sigma \in \SI(\tau) \lnot \phi( \sigma \otimes \tau).
  \end{align*} 
\end{definition}

We observe that in some cases, $\I$ can have a winning strategy that prohibits $\II$ from having any strategy, that is, that forces $\II$ to play out of the tree, vacuously satisfying the winning condition this way.

\begin{remark}
  Definitions~\ref{binary_det} and~\ref{binary_det2} are equivalent in $\rca$. That is for any formula $\phi$, and binary choice tree $T$, the game $G(T, \phi)$ is determined to the sense of~\ref{binary_det} iff it is determined to the sense of~\ref{binary_det2}, provably in $\rca$.
\end{remark}

We introduce now a notion of determinacy for wellfounded binary branching trees.

\begin{definition}
  Let $T$ be a wellfounded tree and $u$ be a finite sequence of natural numbers. We define \begin{align*}
    &W_{\I}(u) = \exists(n < |u|) \ u[n] \not\in T \land (\forall k < n) u[k] \in T \land n \text{ is odd};
    \\ &W_{\II}(u) = \exists(n < |u|) \ u[n] \not\in T \land (\forall k < n) u[k] \in T \land n \text{ is even}.
  \end{align*}
  We then say that the game on $T$ is determined iff \begin{align*}
    \exists \sigma \in \SI \forall \tau \in \SII \ W_{\I}(\sigma \otimes \tau \cap T) \lor \exists \sigma \in \SII \forall \tau \in \SI \ W_{\II}(\sigma \otimes \tau \cap T).
  \end{align*}
  
\end{definition}

Notice that $W_{\I}(\sigma \otimes \tau \cap T)$ can be expressed in a $\Delta^0_1$ fashion since $T$ is assumed to be wellfounded. Indeed, it can be expressed equivalently in the following ways \begin{align*}
  \exists n \ W_{\I}((\sigma \otimes \tau)[n]), \text{ and }
\end{align*}
\begin{align*}
\forall n \ (\sigma \otimes \tau)[n] \not\in T \rightarrow W_{\I}((\sigma \otimes \tau)[n]).
\end{align*}

Therefore, determinacy for wellfounded binary trees is a suitable a priori weaker notion than $(\Sigma^0_1)_k-\Det$ for binary choice game trees, for $k \geq 2$.

\section{Proof of Theorem \ref{TheoremMain}}

\begin{theorem}
Fix $k \geq 2$. The following are equivalent over $\rca$:
\begin{enumerate}
\item Determinacy for \emph{wellfounded} binary choice game trees.
\item $\Delta^0_1$-$\Det$ for binary choice game trees.
\item $(\Sigma^0_1)_k$-$\Det$ for binary choice game trees.
\item $\aca$.
\end{enumerate}
\end{theorem}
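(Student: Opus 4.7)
The plan is the cycle of implications $(4) \Rightarrow (3) \Rightarrow (2) \Rightarrow (1) \Rightarrow (4)$.  Two of the steps are essentially immediate:  $(3) \Rightarrow (2)$ because $\Delta^0_1 \subseteq \Sigma^0_1 = (\Sigma^0_1)_1 \subseteq (\Sigma^0_1)_k$ for $k \geq 1$; and $(2) \Rightarrow (1)$ because, as observed just after the definition of wellfounded tree determinacy, on a wellfounded tree the formulas $W_{\I}$ and $W_{\II}$ admit both $\Sigma^0_1$ and $\Pi^0_1$ equivalents, so the ``first-to-leave'' winning condition is $\Delta^0_1$.

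For $(4) \Rightarrow (3)$, I would work in $\aca$ and prove $(\Sigma^0_1)_k$-$\Det$ for binary choice game trees by induction on $k$, arithmetically defining a value function on the positions of $T$.  The binary choice condition makes $T$ essentially a subtree of $2^{<\N}$ (after ordering the at most two successors at each node), so the strategy space is small enough for a Gale--Stewart-style backward analysis to succeed.  At the inductive step one splits a $(\Sigma^0_1)_k$ payoff into an open part plus a $(\Sigma^0_1)_{k-1}$ residue and combines the inductive hypothesis with an analysis of ``trapping'' strategies.  Some care is needed to manage the effective payoff $\psi$ of Remark~\ref{RemarkDifferenceGamma}, but its very particular form keeps the required comprehension inside $\aca$.

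The key direction is $(1) \Rightarrow (4)$.  Since $\aca$ is equivalent over $\rca$ to the statement that every injective $f : \N \to \N$ has a range, I would fix such an $f$ and consider the integer game $G_f$: \I\ plays $n$; \II\ plays a bit $b$, declaring ``$n \in \mathrm{range}(f)$ iff $b = 1$''; if $b = 1$ then \II\ plays $m$ and wins iff $f(m) = n$, while if $b = 0$ then \I\ plays $m$ and wins iff $f(m) = n$.  An elementary case analysis using the injectivity of $f$ shows in $\rca$ that \I\ has no winning strategy: any supposed $\sigma$ would yield $m := \sigma(\langle n, 0 \rangle)$ with $f(m) = n$ (to beat the $b = 0$ response) while simultaneously requiring $f(m') \neq n$ for all $m'$ (to beat every $b = 1$ response), a contradiction upon taking $m' = m$.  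Hence, assuming determinacy, \II\ has a winning strategy $\tau$, and $\mathrm{range}(f) = \{ n : \tau(\langle n \rangle) = 1 \}$ is $\Delta^0_1(\tau)$, which exists in $\rca$.

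To realize $G_f$ as a wellfounded binary choice game tree, I would encode each integer move bit by bit in a self-delimiting prefix-free code, inserting single-successor ``pass'' turns for the inactive player so that the even/odd alternation of moves is preserved.  Each integer being finite, every play is finite and $T$ is wellfounded; each turn has at most two legal moves, so $T$ is binary choice.  After the encoding phase, a final turn with no legal extensions in $T$ is appended to whichever player has lost, using that ``$f(m) = n$'' is $\Delta^0_1$ in $f$ so that $T$ is constructible in $\rca$.  The main obstacle I expect is verifying that the contradiction against \I\ at the integer level transfers faithfully to the bit-level encoded tree: bit-level strategies must be parsed back into integer data uniformly in $\rca$, and the diagonal choice of a \II-strategy that reuses \I's response from the $b = 0$ branch as her own $m$ in the $b = 1$ branch has to be carried out in a $\Delta^0_1$ way.
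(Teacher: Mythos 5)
There is a genuine gap in your key direction $(1) \Rightarrow (4)$, and it is fatal to the approach rather than a detail of the encoding: the game $G_f$ cannot be realized as a \emph{wellfounded} binary choice game tree. Player \I's first move in $G_f$ ranges over all of $\N$, so any faithful encoding must offer infinitely many distinct finite plays for that single move; since each turn of a binary choice tree allows at most two legal continuations, the encoding phase is an infinite, finitely branching tree, and it contains an explicit infinite play, namely the one in which \I\ keeps emitting code bits and never reaches the terminator of the self-delimiting code. (No prefix-free code of all of $\N$ can put a finite codeword on every infinite $0$--$1$ sequence; that would force the code tree to be wellfounded and finitely branching, hence finite.) So your tree has an infinite branch and statement $(1)$ simply does not apply to it. Declaring non-termination a loss does not repair this: ``\I\ never terminates'' is a $\Pi^0_1$ condition on the play, so the payoff becomes a genuine difference of open sets and you would at best be proving $(3) \Rightarrow (4)$, whereas the whole point of clause $(1)$ is that it is the a priori weakest item on the list and still implies $\aca$.

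The paper sidesteps this by never encoding arbitrary integers. It derives $\aca$ from K\"onig's lemma for finitely branching (binary choice) trees rather than from ranges of injections: assuming an infinite binary choice tree $T$ with no branch, it builds a game in which \I's moves are constrained to trace nodes of $T$ itself (choosing at each step between the at most two successors of the current node in $T$), \II\ names an alternative successor $u_0$, and the two players then race to produce longer extensions inside $T$. Wellfoundedness of the game tree is thus \emph{inherited from the hypothesis that $T$ has no branch} --- exactly the feature your construction cannot have --- and a branch of $T$ is extracted from \II's winning strategy by a $\Sigma^0_1$-induction, yielding the contradiction. Separately, your $(4) \Rightarrow (3)$ is only a sketch; the paper's route is more economical: under $\aca$ one forms the image of $T$ under the recursive-in-$T$ map sending each node's leftmost successor to $0$ and the other successor to $1$, obtaining a subtree of $2^{<\N}$, and then invokes the Nemoto--MedSalem--Tanaka determinacy results for difference classes in Cantor space, pulling the winning strategy back along the embedding. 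The easy implications $(3) \Rightarrow (2) \Rightarrow (1)$ you handle as the paper does.
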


 \begin{proof}
First, we fix $k \geq 1$, assume $\aca$, and prove that $(\Sigma^0_1)_k$-$\Det$ for binary choice game trees holds. Given a binary choice game tree $T$, we define an embedding $\rho: T \to 2^\mathbb{N}$ by: $\rho(s^\smallfrown n) = \rho(s)^\smallfrown 0$ if $s^\smallfrown n$ is the leftmost successor of $s$ in $T$ and $\rho(s^\smallfrown n) = \rho(s)^\smallfrown 1$ if $s^\smallfrown n$ is a successor of $s$ in $T$ but not the leftmost successor. Observe that $\rho$ is recursive in $T$. By $\aca$, the tree $T' = \text{range}(\rho)$ exists as a set and is a subtree of $2^\mathbb{N}$. By Remark \ref{RemarkDifferenceGamma}, every $(\Sigma^0_1)_k$ game on the tree $T'$ belongs to the class $(\Sigma^0_1)_{k+2}$ (as a subset of the full binary tree) and thus is determined, according to the theorem of Nemoto, MedSalem, and Tanaka \cite{NeMSTa07}. Using $\rho$ and $\aca$, a winning strategy for this game can be pulled back into a winning strategy for the game on $T$.

Trivially, $3.$ implies $2.$, which implies $1$.
To complete the proof of the theorem, we show that $\aca$ follows from determinacy for wellfounded binary choice game trees. We appeal to the following well known result (see Simpson \cite{Simpson}*{theorem III.7.2}): $\aca$ is equivalent to weak K\"onig's lemma for binary choice trees. 
 
Thus, we suppose for the sake of a contradiction that there is an infinite binary choice tree $T$ that does not have any branch. 
We will suppose without loss of generality that $T$ does not contain any zeroes in its constitutive sequences. 
Otherwise, we just replace $T$ by the tree $T'$ defined by  $$\langle n_0, n_1, \cdots n_i \rangle \in T' \leftrightarrow \bigwedge_{j = 0}^i (1 \leq n_i) \land \langle n_0-1, n_1-1, \cdots n_i-1 \rangle \in T,$$ 
which is also binary branching, also violates weak König's lemma, and exists by $\Delta^0_1$-Comprehension.

Let us first explain the game we consider, then we will show it has a wellfounded binary choice game tree, and finally we show how to obtain the conclusion from it. \textit{The binary choice game tree will be different from $T$.}
    
    The game is divided in four phases. During phase $1$, player \I\ can construct any sequence $t \in T$. Then, during phase $2$, player \II\ answers with $u_0\in\mathbb{N}$ with $t^{\smallfrown} u_0 \in T$. In phase $3$, player \I\ produces a sequence $v$, intending to extend $t$, with $v(0) \neq u_0$. Finally, during phase $4$, player \II\ produces a sequence $u'$, intending to extend $t^{\smallfrown}u_0$. Player II's goal is for her sequence to be longer than $\I$'s.
    
\begin{center}
\begin{tabular}{lllll}
I: & $t$ & & $v$ & \\
II: & & $u_0$ & & $u'$
\end{tabular}
\end{center}

We now describe how to encode this game with a binary choice game tree. We explain how player I plays the sequence $t$; the sequences $v$ and $u'$ are played similarly.
Suppose a sequence $s \in T$ has been constructed so far by player $\I$, and they would like to continue extending $s$. Then
    \begin{enumerate} 
      \item\label{Playn} First, player $\I$ plays $n$, the leftmost successor of $s$ in $T$.
      \item As a second move, $\I$ plays either $0$ to confirm that $n$ is the next element to be added to the sequence (recall that by choice of $T$, we have $s^{\smallfrown}0 \not\in T$), or else $\I$ plays the rightmost successor $m$ of $s$ in $T$. 
      In this case, $m$ is added as the next element of the sequence.
    \end{enumerate} 
The new sequence is now $s^{\smallfrown}n$ or $s^\smallfrown m$. 
After this, $\I$ either indicates that they are not done constructing their sequence by playing $0$ and repeat the above process, or plays $1$ to indicate that the next phase of the game is to start. Throughout this process, player $\II$ must play $0$ each turn, as her moves are irrelevant.

Observe that throughout the construction of the sequence $t$, each player has at most two legal moves each turn. Moreover, whether a play by $\I$ is legal requires checking finitely many instances of membership in $T$, so the binary choice game tree defined thus far is recursive.

Here is an illustration.
\\\phantom{a}\\
\begin{tabular}{lllllllllllll}
    \I:\ & $0$ & & $0?n_0$ & &$0?m_0$ & & $0$ & & $0?n_1$ & & $0?m_1 \ \cdots$ &\\
    &&&&&&&&&&&&\\
          
    \II:\ & & $*$ & & $*$ & & $*$ & & $*$ & & $*$ & & $\cdots$

\end{tabular}
\\\phantom{a}
\\\phantom{aaaaaaaaaaaaaaasss}
\begin{tabular}{lllllllllll}
  & $\cdots$ & & $0$ & & $0?n_{|t|-1}$ & &$0?m_{|t|-1}$ & & $1:$signal &\\
  &&&&&&&&&&\\
  & & $\cdots$ & & $*$ & & $*$ & & $*$ & & $[u_0]$
\end{tabular}
\\\phantom{a}\\
For example, here $t$ is given by $\langle p_0, p_1, \cdots p_{|t|-1} \rangle$. The symbol $*$ indicates that player $\II$'s moves are not relevant, although we formally require that she play zeroes. The play $u_0$ and the sequences $v$ and $u'$ are encoded similarly using $T$, \textit{except} that when playing $u_0$, we allow $n = 0$ in \eqref{Playn}, in which case the next play by $\II$ must also be $0$ (confirming the choice $n = 0$). This is done for a technical reason. Note that in this case $t^\frown u$ will not belong to $T$.
We observe that player $\I$'s legal moves require that $t^\frown v \in T$, and player $\II$'s legal moves (given by the game tree) require that $t^\smallfrown u \in T$, where $u = \langle u_0 \rangle^{\smallfrown}u'$, except possibly if $u_0 = 0$.

The winning condition for $\II$ is the set consisting of all plays satisfying:
\begin{equation}\label{PlayerIIWC}
v(0) = u_0 \lor v = \emptyset \lor \big( |v| \leq |u| \wedge t^\smallfrown u \in T\big).
\end{equation}
Since $T$ has no branches by hypothesis, there are no infinite plays of the game, so it is indeed a wellfounded binary choice game tree.

According to the definition of games on binary choice game trees, we have the following: if one player plays a move outside of the game tree, then the first one to do so loses. Otherwise, if both players play within the game tree, player $\II$ wins if and only if \eqref{PlayerIIWC} holds. By construction of the game tree, we have the following:
\begin{enumerate}
\item\label{RuleSumm1} If $t^\smallfrown v \not \in T$, then player $\II$ wins;
\item\label{RuleSumm2} else, if $t^\smallfrown v \in T$ but $v = \emptyset$ or $v(0) = u_0$, then player $\II$ wins; 
\item\label{RuleSumm3} else, if $t^\smallfrown u \not\in T$, then player $\I$ wins;
\item\label{RuleSumm4} else player $\II$ wins if and only if $|v| \leq |u|$.
\end{enumerate}

Let us show that \I\ cannot have a winning strategy. 
Suppose otherwise and let $\sigma$ be a winning strategy. Let us consider $t$, the sequence $\sigma$ constructs in the first phase of the game. We must have that $t \in T$, as otherwise \II\ would win because of \eqref{RuleSumm1}. We also must have that $t$ has a successor $n$ in $T$, otherwise \II\  would play $n = 0$ and win because of \eqref{RuleSumm2}. 
Since $\sigma$ is winning, $t$ must be splitting, i.e., have two immediate successors $n$ and $m$, for if it only had one, it could be chosen as $u_0$ and player $\II$ would win by \eqref{RuleSumm2}.  

Let $T_n = \{ s : t^{\smallfrown}n^{\smallfrown}s \in T \}$ and $T_m$ be defined similarly. There are two cases:
\begin{itemize}
  \item Either $T_n$ or $T_m$ is bounded. Then player \II\ can win the game by choosing the index having the longest extensions (or with unbounded extensions) in $T$ as $u_0$, and then choosing $u$ so that $|v| \leq |u|$.
  \item Both $T_n$ and $T_m$ are unbounded. Then, whatever \II\ chooses, she can produce a longer sequence in $T$ than $\I$, again ensuring $|u| \leq |v|$.
\end{itemize}
In both cases, we obtain a contradiction to the fact that $\sigma$ was a winning strategy. 

By determinacy for wellfounded binary choice game trees, \II\ has a winning strategy $\tau$. 
We then define $f: \N \to \N$ as \begin{align*}
  f(n) = u_0(\tau(f[n])),
\end{align*}
i.e., the answer $u_0$ of $\tau$ to \I\ playing $t = f[n]$, where $f[n]$ is the string obtained by concatenating $f(k)$ for $k < n$.

Recall that $\Sigma^0_1$-induction is one of the axioms of $\rca$. We use it to prove that 
\begin{align*}
\forall n\, \theta(n), \qquad \text{where } \theta(n) = [f[n] \in T \wedge \exists k\, \big( f[n]^{\smallfrown}k \in T \big)].
\end{align*}
Note that $\theta(n)$ is indeed a $\Sigma^0_1$ formula, with $T$ and $\tau$ as parameters.
First $f[0] = u_0(\tau(\emptyset)) \in T$ since $T$ is non-empty and otherwise \I\ could defeat this play by playing $v$ in the tree. Moreover, $f[0]$ has a successor in the tree. Otherwise, since $T$ is infinite (in particular $|T| > 3$), \I\ could have played an extension of $\emptyset$ incompatible with $f[0]$ and  of length $\geq 2$, defeating a play consistent with $\tau$, which is impossible.

We prove by induction that $f[n]$ belongs to $T$ and has a successor in $T$. We first consider the case $n = 1$ explicitly for the sake of exposition.
Since we proved $f[0]$ has a successor, $f[1] = f(0)^\smallfrown u_0(\tau(f[0]))$ is in $T$, since, again, otherwise \I\ could defeat $u_0(\tau(f[0]))$ as above by playing $v$ in $T$.

Suppose towards a contradiction that
$$\lnot \diamond^1(f[1]) := \forall n \ f[1]^{\smallfrown}n \not\in T$$
holds; thus $|T_{f[1]}| = 1$. Then because $f[1] = f(0)^\frown u_0(\tau(f[0]))$ and $\tau$ is winning,  we have
$$\lnot\diamond^2(f[0]) := \forall n,m \ f[0]^{\smallfrown}\langle n,m \rangle \not\in T,$$ 
for otherwise if $v = \langle n,m \rangle$ witnesses $\diamond^2(f[0])$,
then it follows from $\lnot\diamond^1(f[0])$ that $\I$ could have defeated $\tau$ by playing $v$ against $u = \langle u_0 \rangle = \langle f(1)\rangle $; thus, $|T_{f[0]}| \leq 3$. 
In a similar way we obtain
$$\lnot\diamond^3(\emptyset) := \forall n,m,k \ \langle n,m,k \rangle \not\in T.$$ 
This is because otherwise if $v = \langle n,m, k\rangle$ witnesses $\diamond^3(\emptyset)$, then it follows from $\lnot\diamond^2(f[0] )$ that $\I$ could have defeated $\tau$ by playing $v$ after $\tau$ plays $u_0 = f[0]$; hence $|T| = |T_\emptyset| \leq 7$. 
But this is a contradiction to the assumption that $T$ is infinite, hence the conclusion follows. See Figure \ref{FigureTreeCase1} for a picture.

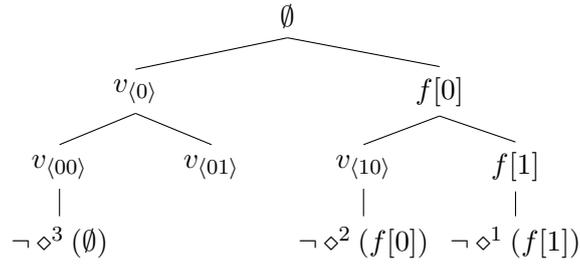
\begin{figure}
  \
  \center
  \begin{tikzpicture}[
  level 1/.style = { level distance   = 10mm,
                     sibling distance = 40mm },
  level 2/.style = { level distance   = 10mm,
                     sibling distance = 20mm },
  level 3/.style = { level distance   = 10mm,
                     sibling distance = 10mm } ]
  \node {$\emptyset$}
    child { node {$v_{\langle 0 \rangle}$}
      child { node {$v_{\langle 0 0 \rangle}$} 
        child { node {$\lnot\diamond^3(\emptyset)$} }
      }
      child { node {$v_{\langle 0 1 \rangle}$} }      
    }
    child { node {$f[0]$} 
      child { node {$v_{\langle 10 \rangle}$} 
        child { node {$\lnot\diamond^2(f[0])$} }
      }
      child { node {$f[1]$}
        child { node {$\lnot\diamond^1(f[1])$} }
      }
    }
  ;
 \end{tikzpicture}
 \caption{Proof that $f[1]$ has a successor in $T$.}\label{FigureTreeCase1}
\end{figure}

The general case follows a similar line of reasoning. Suppose that $f[n] \in T$ and has a successor in $T$. Hence, $f[n+1] \in T$, otherwise $\tau$ would lose after $\I$ plays this successor as $v(0)$. 
Now we need to prove that $f[n+1]$ has a successor in $T$. 
We claim that:
\begin{equation}\label{eqSubsidiaryInduction}
|T_{f[n+1-j]}| \leq 2^{j+1}-1.
\end{equation}
Observe that \eqref{eqSubsidiaryInduction} is equivalent to the assertion ``all sequences of length $2^{j+1}$ contain an element not in $T_{f[n+1-j]}$'' and thus is a $\Pi^0_1$ assertion about $j$ with $T$ and $\tau$ as parameters.
We prove \eqref{eqSubsidiaryInduction} by a subsidiary $\Pi^0_1$-induction on $j \leq n+1$.
The base case asserts that $|T_{f[n+1]}| \leq 1$ and is precisely the assumption that $f[n+1]$ has no successor in $T$. The inductive step is established by a similar argument as above, through a process extending that of Figure \ref{FigureTreeCase1}, using the fact that $\tau$ is a winning strategy.
Thus, by $\Pi^0_1$-induction, we derive the claim \eqref{eqSubsidiaryInduction} which in particular implies that $|T| \leq 2^{n+2}-1$,
contradicting the assumption that $T$ was infinite.
Hence, we conclude that $f[n+1]$ indeed has a successor in $T$.
By the main $\Sigma^0_1$-induction, we conclude $\forall n \ f[n] \in T$, showing that $T$ has a branch, which is a contradiction. This completes the proof.
\end{proof}

\section*{Acknowledgements}
This work was partially supported by FWF grants P36837, STA-139. The authors would also like to acknowledge the support of the Erwin Schr\"odinger Institute as part of the thematic program ``Reverse Mathematics'' in 2025.

\bibliographystyle{abbrv}

\begin{bibdiv}
  \begin{biblist}
  
  \bib{AgMSD}{article}{
        author={Aguilera, J. P.},
         title={{The Metamathematics of Separated Determinacy}},
          date={2025},
       journal={Invent. Math.},
        volume={240},
         pages={313\ndash 457},
  }
  
    \bib{NeMSTa07}{article}{
        author={Nemoto, T.},
        author={MedSalem, M.~O.},
        author={Tanaka, K.},
         title={{Infinite Games in the Cantor Space and Subsystems of Second
    Order Arithmetic}},
          date={2007},
       journal={Math. Log. Q.},
        volume={53},
         pages={226\ndash 236},
  }
  
  \bib{Simpson}{book}{
        author={Simpson, S.},
         title={{Subsystems of Second-Order Arithmetic}},
          date={1999},
  }
  
  \bib{St77}{thesis}{
        author={Steel, J.~R.},
         title={{Determinateness and Subsystems of Analysis}},
          type={Ph.D. Thesis},
          date={1977},
  }
  
  \end{biblist}
\end{bibdiv}

\end{document}